\documentclass[reqno,11pt,twoside,english]{amsart}
\usepackage[T1]{fontenc}
\usepackage[latin9]{inputenc}
\pagestyle{headings}
\usepackage{amsthm}
\usepackage{amssymb}
\usepackage{amsmath,amsfonts,epsfig}

\makeatletter

%%%%%%%%%%%%%%%%%%%%%%%%%%%%%% Textclass specific LaTeX commands.
\numberwithin{equation}{section}
%\numberwithin{figure}{section}

%%%%%%%%%%%%%%%%%%%%%%%%%%%%%% User specified LaTeX commands.

%DIMENSIONI della pagina
\voffset=-1.5cm \textheight=23cm \hoffset=-.5cm \textwidth=16cm
\oddsidemargin=1cm \evensidemargin=-.1cm
\footskip=35pt \linespread{1.10}
\parindent=20pt

\usepackage{color}
\usepackage[final,linkcolor = blue,citecolor = blue,colorlinks=true]{hyperref}

\usepackage{type1cm,ae}
\usepackage{graphicx}
\usepackage{xspace}
\usepackage{setspace}
\usepackage[english]{babel}
\usepackage{tikz}
\usetikzlibrary{arrows,patterns}

%%%%%%%%%%%%%%%%%%%%%%%%%%%%%%%%%%%%%%%%%%%%%%%%%%%%%%%%%%%%%%%%%%% From Zhou Yuan's paper

 \newcommand{\R}{{\mathbb R}}

 \newcommand\dist{{\mathop\mathrm{\,dist\,}}}

 \newcommand\ep{\epsilon}

  \newcommand\Om{{\Omega}}

 \newcommand\diam{{\mathop\mathrm{\,diam\,}}}

 \newcommand*{\ssub}{\subset\subset}
\DeclareMathOperator{\Lip}{Lip}
%\DeclareMathOperator{\esssup}{esssup}

%%%%%%%%%%%%%%%%%%%%%%%%%%%%%%%%%%%%%%%%%%%%%%%%%%%%%%%%%%%%%%%%% From Zhou Yuan's paper
\numberwithin{equation}{section}
\textwidth15cm \textheight22cm \flushbottom
\oddsidemargin=0.5cm \evensidemargin=0.5cm
\footskip=35pt \linespread{1.10}
\parindent=20pt
\setcounter{MaxMatrixCols}{30}
\newtheorem{theorem}{Theorem}[section]

\newtheorem{corollary}[theorem]{Corollary}

\newtheorem{lemma}[theorem]{Lemma}

\theoremstyle{definition}
\begingroup
\newtheorem{definition}[theorem]{Definition}
\newtheorem{remark}[theorem]{Remark}

\endgroup

\def\Xint#1{\mathchoice
	{\XXint\displaystyle\textstyle{#1}}%
	{\XXint\textstyle\scriptstyle{#1}}%
	{\XXint\scriptstyle\scriptscriptstyle{#1}}%
	{\XXint\scriptscriptstyle\scriptscriptstyle{#1}}%
	\!\int}
\def\XXint#1#2#3{{\setbox0=\hbox{$#1{#2#3}{\int}$}
		\vcenter{\hbox{$#2#3$}}\kern-.5\wd0}}

\def\dashint{\Xint-}
 % GOOD!
 % Good for Computer Modern!

%\newenvironment{proof}[1][Proof]{\textbf{#1.} }{\ \rule{0.5em}{0.5em}}

\makeatother

\usepackage{babel}
\begin{document}

\title[Regularity of quasi-$n$-harmonic mappings into NPC spaces]{Regularity of quasi-$n$-harmonic mappings into NPC spaces}

\author{Chang-Yu Guo and  Chang-Lin Xiang*}

\address[Chang-Yu Guo]{Department of Mathematics, University of Fribourg, CH-1700, Fribourg, Switzerland}
\email{changyu.guo@unifr.ch}

\address[Chang-Lin Xiang]{School of Information and Mathematics, Yangtze University, Jingzhou 434023, P.R. China}
\email{changlin.xiang@yangtzeu.edu.cn}

\thanks{* Corresponding author}
\thanks{C.-Y. Guo was supported by Swiss National Science Foundation Grant 153599 and 165848. C.-L. Xiang is  financially supported by the National Natural Science Foundation of China (No. 11701045) and by  the Yangtze Youth Fund (No. 2016cqn56).}

\begin{abstract}
We prove local H\"older continuity of quasi-$n$-harmonic mappings from Euclidean domains into metric spaces with non-positive curvature in the sense of Alexandrov. We also obtain global H\"older continuity of such mappings from bounded Lipschitz domains.
\end{abstract}

\maketitle

{\small
\keywords {\noindent {\bf Keywords:} $n$-harmonic mappings; quasi-$n$-harmonic mappings; NPC spaces; Regularity; Reverse H\"older inequalities}
\smallskip
\newline
\subjclass{\noindent {\bf 2010 Mathematics Subject Classification: 49N60; 58E20}   }
%\tableofcontents
}
\bigskip

\arraycolsep=1pt

\section{Introduction and main results}

\subsection{Background}

Given a mapping $u\colon M\to N$ between two  Riemannian manifolds with $\dim M=n$ and $1<p<\infty$, there is a natural concept of $p$-energy associated to $u$. Minimizers (or more generally, critical points) of such energy functionals are referred to  as $p$-harmonic mappings and harmonic mappings in case $p=2$. The research on harmonic mappings has a long and distinguished history, making it one of the most central topics in geometric analysis on manifolds~\cite{li12,sy97}. In his pioneering work, Morrey~\cite{m48} proved the H\"older continuity of an energy minimizing map when $n=2$ (and smooth if $M$ and $N$ smooth).  The breakthrough in higher dimensional theory of harmonic mappings was made by Eells and Sampson~\cite{es64},  where they proved that every homotopy class of maps from a closed manifold $M$ into $N$ has a smooth representative, with $N$  having nonpositive curvature. See more results in e.g. Hartman~\cite{h67} and Hamilton~\cite{h75}.  The regularity theory for harmonic mappings into general target Riemannian manifolds has later been developed by Schoen and Uhlenbeck in the seminal paper~\cite{su82}, and which obtained extension by Hardt and Lin~\cite{Hardt-Lin-1987} to  general $p$-harmonic mappings ($1<p<\infty$).

Inspired by the celebrated work of Gromov and Schoen~\cite{gs92}, where the authors proposed a variational approach for the theory of harmonic mappings to the setting of mappings into singular metric spaces and successfully applied to important rigidity problems for certain discrete groups, and by the fundamental work~\cite{ks93}, where the authors established existence, uniqueness and local Lipschitz regularity theory for harmonic mappings from compact smooth Riemannian manifolds to singular metric spaces, harmonic mappings into or between singular metric spaces has received considerable amount of growing interest during the last twenty years, with a particular emphasis on metric spaces of non-positive curvature in the sense of Alexandrov (NPC); see for instance~\cite{cl01,c95,dm10,ef01,g17,gw17,hz16, iw08,j94,j97,lw16,m98,st01,s02,s02b,zz13}.
%Furthermore, the regularity theory of harmonic maps between singular metric spaces (such as from a domain of an Alexandrov space with curvature bounded from below into an NPC space) also received extensive studies,
In particular, in the research monograph of Eells-Fuglede~\cite{ef01}, the authors extended the theory of harmonic mappings $u:\Om\to X$ to the setting where $\Omega$ is an admissible Riemannian polyhedron and $X$ an NPC space. Gregori~\cite{g98} further extended the
existence and uniqueness theory of harmonic mappings to the setting where $X$ is a Lipschitz Riemannian manifold. Capogna and Lin~\cite{cl01} extended part of the harmonic mapping theories to the setting of mappings from Euclidean spaces to the Heisenberg groups. Sturm~\cite{st01,s02,s02b} developed a theory of harmonic mappings betwen singular metric spaces via a probabilistic theory and (generalized) Dirichlet forms.
In the recent remarkable work of Zhang and Zhu \cite{zz13}, the authors proved the important interior Lipschitz regularity of harmonic mappings from certain Alexandrov spaces to NPC spaces. Parallel to the mapping case, the theory of harmonic functions on singular metric spaces also gained growing interest in the last twenty years; see for instance~\cite{j14,ks01,krs03,s01} and the references therein.

Besides the harmonic case ($p=2$) and the intermediate case ($1<p<n$), the borderline case $p=n$ also received special attention as they often enjoy better property than general $p$-harmonic mappings. For instance, among other results, Hardt and Lin~\cite{Hardt-Lin-1987} showed that minimizing $n$-harmonic mappings from an $n$-dimensional compact Riemannian manifold into a $C^2$ Riemannian manifold are locally $C^{1,\alpha}$ for some $0<\alpha<1$;  Wang~\cite{w05} proved that $n$-harmonic mappings into Riemannian manifolds (without boundary) enjoy nice compactness properties; Mou and Yang~\cite{my96} obtained that $n$-harmonic mappings are everywhere regular in the interior, continuous up to the boundary (of a bounded smooth domain), and have removable isolated singularities; see also~\cite{nvv16} for a recent improvement of this result. When the target metric space is the real line $\R$, $n$-harmonic functions play a particularly important role in the theory of quasiconformal mappings and quasiregular mappings; see for instance~\cite{hkm06,io12} and the various references therein.

\subsection{Main results}
We first recall the following definition of quasi-$n$-harmonic mappings.

\begin{definition}(quasi-$n$-harmonic mappings)\label{def:quasi-n-harmonic}
Let $\Omega\subset \R^n$ be a domain and $X$ a metric space. A mapping $u\in W^{1,n}(\Omega,X)$ is said to be $Q$-quasi-$n$-harmonic, $Q\geq 1$, if
$$E_n(u|_{\Omega'})\leq Q\cdot E_n(v|_{\Omega'})$$
for every relatively compact domain $\Omega'\subset \Omega$ and every $v\in W^{1,n}(\Omega,X)$ with $u=v$ almost everywhere in $\Omega\backslash \Omega'$, where $E_n(u)$ is the $n$-energy of $u$ defined as in Section~\ref{subsec:Sobolev maps}.
\end{definition}

Note that 1-quasi-$n$-harmonic mappings are also called $n$-harmonic mappings in literature (see e.g. Hardt and Lin \cite{Hardt-Lin-1987}). When $n=2$, we recover the class of quasi-harmonic or harmonic mappings. Another typical nontrivial example in higher dimensions is given by quasiregular mappings (see Giaquinta and Giusti \cite[Theorem 2.4]{gg84}), which is a mapping $u:\Omega\to \mathbb{R}^n$ satisfying $|Du|^n\le k\det(Du)$. In particular, when $u$ is a homeomorphism, it is a quasicomformal mapping. More generally, quasi-$n$-harmonic mappings is a special case of quasi-minima that was initially studied by Giaquinta and Giusti \cite{gg84} in Euclidean spaces.

Based on the recent solution of Plateau's problem in proper metric spaces~\cite{lw17}, Lytchak and Wenger~\cite{lw16} considered the interior regularity of quasi-harmonic mappings from two-dimensional Euclidean domains to proper metric spaces. They proved that each quasi-harmonic mapping $u\colon \Omega\to X$ from a planar Euclidean domain to a large class of proper metric spaces has a locally H\"older continuous representative.

Motivated by the above work of Lytchak and Wenger~\cite{lw16}, and also by the recent development of harmonic mappings in singular metric spaces, in this short note,  we study interior and boundary regularity of quasi-$n$-harmonic mappings from Euclidean domains to NPC spaces.

Our first main result can be viewed as a natural partial extension of the interior regularity result of Lytchak and Wenger~\cite{lw16} to higher dimensions.

\begin{theorem}\label{thm:main theorem}
Let $\Omega\subset \R^n$ be a domain and $X$ an NPC space. Then each $Q$-quasi-$n$-harmonic mapping $u\colon \Omega\to X$ has a locally $\alpha$-H\"older continuous representative for some $\alpha$ depending only on $Q$ and $n$.
\end{theorem}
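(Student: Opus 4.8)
The plan is to establish local Hölder continuity through the classical strategy for quasi-minimizers: prove a decay estimate for the $n$-energy on small balls, then invoke a Morrey-type embedding adapted to the metric-space-valued setting. The key advantage of the borderline exponent $p=n$ is scale invariance of the $n$-energy, which makes the iteration especially clean.

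First I would fix a point $x_0\in\Omega$ and a ball $B_r=B(x_0,r)\ssub\Omega$, and compare $u$ on $B_r$ with a competitor $v$ that agrees with $u$ on $\Omega\setminus B_r$. The natural choice of competitor exploits the NPC geometry: since $X$ is NPC, geodesics between points exist and are unique, and the squared-distance function is convex along geodesics, so one can ``cone off'' the boundary values by geodesically interpolating toward a fixed point (e.g. a barycenter of $u|_{\partial B_r}$) and obtain a competitor whose $n$-energy on $B_r$ is controlled by the energy of the boundary trace. Feeding this competitor into the $Q$-quasi-$n$-harmonicity inequality $E_n(u|_{B_r})\le Q\,E_n(v|_{B_r})$ yields an estimate of the form
\begin{equation}\label{eq:decay-plan}
\int_{B_\rho}|\nabla u|^n\,dx\le C\Big(\frac{\rho}{r}\Big)^{n-\theta}\int_{B_r}|\nabla u|^n\,dx
\end{equation}
for $0<\rho\le r$, with $\theta=\theta(Q,n)>0$. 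The convexity/NPC structure is precisely what lets the cone competitor beat the trivial bound and produce the crucial decay gain $n-\theta<n$; this is where the non-positive curvature hypothesis enters essentially.

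From the decay estimate \eqref{eq:decay-plan} the Hölder bound follows by the standard Campanato/Morrey machinery: \eqref{eq:decay-plan} says that the $n$-energy density has Morrey growth with exponent $n-\theta$, which by the Morrey embedding forces $u$ (more precisely its distance-to-a-point functions, or equivalently the composition $d(u(\cdot),p)$ for fixed $p\in X$) to be locally $\alpha$-Hölder with $\alpha=\theta/n$. Because the target is only a metric space, I would phrase this in terms of the upper gradient or the approximate metric differential used to define $E_n$ in Section~\ref{subsec:Sobolev maps}, controlling the essential oscillation of $u$ on $B_\rho$ directly by $\big(\int_{B_\rho}|\nabla u|^n\big)^{1/n}\lesssim\rho^{\alpha}$; the exponent $\alpha$ then depends only on $Q$ and $n$, as claimed.

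The main obstacle I anticipate is constructing the comparison map $v$ rigorously in the metric-space-valued Sobolev class $W^{1,n}(\Omega,X)$ and bounding its $n$-energy on $B_r$ by the $(n-1)$-dimensional energy of the boundary trace of $u$. One must verify that the geodesic homotopy (cone construction) indeed lies in $W^{1,n}$, that it attains the correct boundary values in the trace sense, and that the NPC convexity inequalities translate into the quantitative energy bound without losing the sharp power of $r$. A secondary technical point is ensuring that the argument is genuinely local and self-improving, i.e. that the constant $C$ in \eqref{eq:decay-plan} does not degenerate as $\rho/r\to0$ upon iteration; this is handled by the usual dyadic iteration lemma once \eqref{eq:decay-plan} holds for a single fixed ratio.
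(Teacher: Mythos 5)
Your strategy is viable, but it is genuinely different from the paper's. The paper never establishes an energy decay estimate: it runs the Giaquinta--Giusti quasi-minima machinery, comparing $u$ with the geodesic interpolation $u_\eta=``(1-\eta)u+\eta p_0"$ for a cutoff $\eta$, controlling the competitor's energy through Korevaar--Schoen's pullback-tensor inequalities, deriving a Caccioppoli-type estimate by hole-filling, and then invoking the Giaquinta--Modica/Gehring local reverse H\"older inequality to obtain higher integrability, $|\nabla u|_n\in L^{1+\epsilon}_{\rm loc}$, i.e. $u\in W^{1,p}_{\rm loc}(\Omega,X)$ for some $p=p(Q,n)>n$; local H\"older continuity is then the super-critical Morrey embedding of Remark~\ref{rmk:on good p}. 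Your route---geodesic cone over a barycenter of $u|_{\partial B_r}$, coarea plus a Poincar\'e inequality on spheres to bound the cone energy by $Cr\int_{\partial B_r}|\nabla_T u|_n\,d\sigma$, the resulting differential inequality $\phi(r)\le CQ\,r\,\phi'(r)$ for $\phi(r)=\int_{B_r}|\nabla u|_n\,dx$, and Morrey--Campanato iteration---is the classical Morrey/Hardt--Lin argument at the conformal exponent, and it does go through for NPC targets, because geodesic contraction toward a point, $d\bigl(\gamma_{p_0a}(t),\gamma_{p_0b}(t)\bigr)\le t\,d(a,b)$, yields exactly the needed cone energy bound; in this sense your plan is the $n$-dimensional analogue of Lytchak--Wenger's planar argument, with the NPC cone replacing their Plateau solutions and filling inequality. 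The trade-off: your approach is more geometric and produces an explicit decay exponent, but it requires restriction-to-spheres, trace-matching and gluing theory for $W^{1,n}(\cdot,X)$ (available in Korevaar--Schoen, Section 1.12, but nontrivial), which you correctly flag as the main obstacle yet leave unproved; the paper's approach avoids all sphere/trace theory, gives gradient higher integrability as a byproduct, and recycles the identical computation for the boundary regularity theorem. Two corrections to your bookkeeping: the differential inequality yields a decay exponent $\beta=1/(CQ)$, which is small and degenerates as $Q\to\infty$, so the decay should read $(\rho/r)^{\beta}$ rather than a near-maximal gain $(\rho/r)^{n-\theta}$; and a decay exponent $n-\theta$ would give the H\"older exponent $\alpha=(n-\theta)/n$, not $\alpha=\theta/n$ as you wrote (with exponent $\beta$, the correct conclusion is $\alpha=\beta/n$).
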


We would like to point out the the H\"older continuity in Theorem~\ref{thm:main theorem} is best possible even when $X=\R$; see~\cite{s01,ks01}. As a corollary of Theorem~\ref{thm:main theorem}, we obtain that each quasi-harmonic mapping from planar Euclidean domains to NPC spaces has a locally H\"older continuous representatives.

\begin{corollary}\label{coro:quasiharmonic discs} Let $\Omega\subset \R^2$ be a domain and $X$ an NPC space. Then each $Q$-quasi-harmonic mapping $u\colon \Omega\to X$ has a locally $\alpha$-H\"older continuous representative for some $\alpha$ depending only on $Q$.
\end{corollary}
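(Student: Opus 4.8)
The plan is to obtain Corollary~\ref{coro:quasiharmonic discs} as a direct specialization of Theorem~\ref{thm:main theorem} to the planar case $n=2$, so that the only substantive point is to verify that the two classes of mappings coincide in this dimension. Recall that a harmonic mapping in dimension two is, by definition, a critical point (here, a minimizer in the quasi sense) of the Dirichlet energy $E_2$, and that the $n$-energy $E_n$ of Definition~\ref{def:quasi-n-harmonic} reduces, for $n=2$, to exactly this Dirichlet energy. Consequently a $Q$-quasi-harmonic mapping $u\colon \Omega\to X$ from a planar domain $\Omega\subset\R^2$ is nothing other than a $Q$-quasi-$2$-harmonic mapping in the sense of Definition~\ref{def:quasi-n-harmonic}.

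First I would spell out this identification explicitly: for $u\in W^{1,2}(\Omega,X)$ one checks that the energy functional appearing in the notion of quasi-harmonicity agrees, up to the normalization fixed in Section~\ref{subsec:Sobolev maps}, with $E_2(u)$, so that the quasi-minimizing inequality $E_2(u|_{\Omega'})\leq Q\cdot E_2(v|_{\Omega'})$ holds for every relatively compact $\Omega'\subset\Omega$ and every competitor $v$ coinciding with $u$ outside $\Omega'$. This is precisely the hypothesis of Theorem~\ref{thm:main theorem} in the case $n=2$.

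With this in hand the conclusion is immediate: Theorem~\ref{thm:main theorem}, applied with $n=2$, yields a locally $\alpha$-H\"older continuous representative of $u$, where a priori $\alpha$ depends only on $Q$ and $n$. Since $n=2$ is now fixed, the dependence on $n$ disappears and $\alpha$ depends only on $Q$, exactly as asserted.

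The only place where I anticipate needing a little care --- rather than a genuine obstacle --- is confirming that the normalization of $E_2$ used here matches the energy defining quasi-harmonicity in the two-dimensional theory (for instance that of Lytchak and Wenger~\cite{lw16}); but since the quasi-minimizing inequality is invariant under multiplying the energy by a fixed positive constant, any such discrepancy is harmless and leaves the constant $Q$ unchanged. Thus no new analysis beyond Theorem~\ref{thm:main theorem} is required.
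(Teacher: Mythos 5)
Your proposal is correct and matches the paper's own treatment: the corollary is obtained exactly by specializing Theorem~\ref{thm:main theorem} to $n=2$, using the fact (noted in the paper right after Definition~\ref{def:quasi-n-harmonic}) that quasi-$2$-harmonic mappings are precisely quasi-harmonic mappings, so the exponent $\alpha$ depends only on $Q$ once $n=2$ is fixed. Your remark on the harmlessness of the energy normalization is a sensible extra check but introduces nothing beyond what the paper's argument already requires.
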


Corollary~\ref{coro:quasiharmonic discs} is not really new, and in fact, it follows from the proof of~\cite[Theorem 1.3]{lw16}. Indeed, the main ingredients in their arguments are \emph{solvability of Plateau problem in proper metric spaces} and \emph{an energy filling inequality} (i.e. \cite[Theorem 1.5]{lw16}). The solvability of Platau problem is well-known in the context of NPC spaces (see e.g.~\cite{gw17}) and the authors also pointed out the energy filling inequality holds for general NPC spaces. Thus Corollary~\ref{coro:quasiharmonic discs} follows from the proof of Theorem 1.3 there. However, our proof of Corollary~\ref{coro:quasiharmonic discs} is more elementary and simpler, comparing with the more general proof there. On the other hand, our proof relies heavily on the special structure of NPC spaces and hence seems hard to be extended to the more general setting as considered in~\cite{lw16}. As to quasiharmonic mappings on higher dimensional Euclidean domains ($n\ge 3$),  there is no hope to derive H\"older continuity in this respect, even for quasiharmonic mappings into Euclidean domains (in this case,  quasiharmonic mappings are also named quasiminima). As pointed out in Giaquinta \cite[page 253]{Giaquinta-Book}, there exists a quasiminima for some Dirichlet integral which is singular in a dense set.

Our second main result concerns boundary regularity of quasi-$n$-harmonic mappings from bounded Lipschitz domain to NPC spaces, which can be viewed as a natural partial extension of~\cite[Theorem 1.4]{lw16}. There have been extensive contributions for boundary regularity on harmonic mappings in the literature which is impossible to list completely. We only mention a few that are most related to our work.  For mappings from bounded smooth Euclidean domain to  Euclidean spaces, we refer to Jost and Meier \cite{Jost-Meier-83-MathAnn} in which more general result was obtained. That is, bounded minimum of certain quadratic functional is proven to be H\"older continuous in a neighborhood of the boundary with sharp H\"older exponent. For mappings from a compact Riemannian manifold with boundary or from bounded smooth domain of Riemannian manifolds to smooth Riemannian manifolds, we refer to Schoen and Uhlenbeck \cite{su83} for harmonic mappings and Hardt and Lin \cite{Hardt-Lin-1987} for $p$-harmonic mappings, respectively. As for boundary regularity results on harmonic mappings from compact Riemannian domains to NPC, we would like to refer to the work of Serbinowski \cite{Serbinowski-94-CAG}. Our result is new in the setting of quasi-$n$-harmonic mappings.

\begin{theorem}\label{thm:boundary regularity}
Let $\Omega\subset \R^n$ be a bounded domain with a Lipschitz boundary $\partial \Omega$ and $X$ an NPC space.
%Fix a continuous mapping $g\in W^{1,p}(\Omega,X)$ with $p>n$.
Let $u\colon \Omega\to X$ be a $Q$-quasi-$n$-harmonic mapping whose trace coincides with the trace of $h\in W^{1,p}(\Omega,X)$ with $p>n$. Then $u$ is $\alpha$-H\"older continuous in a neighborhood of $\partial\Omega$ for some $\alpha$ depending only on $Q$, $n$ and $p$.	
\end{theorem}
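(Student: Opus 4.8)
The plan is to reduce the theorem to a Morrey-type growth estimate for the $n$-energy near $\partial\Omega$ and then to feed it into the metric-valued Morrey embedding already underlying Theorem~\ref{thm:main theorem}. Write $|\nabla u|$ for the $n$-energy density, so that $E_n(u;U)=\int_U|\nabla u|^n\,dx$, and for a fixed $x_0\in\partial\Omega$ set $\phi(r)=\int_{B_r(x_0)\cap\Omega}|\nabla u|^n\,dx$ and $\psi(r)=\int_{B_r(x_0)\cap\Omega}|\nabla h|^n\,dx$. Since $h\in W^{1,p}(\Omega,X)$ with $p>n$, H\"older's inequality gives the a priori decay
\[
\psi(r)\le\Big(\int_{B_r(x_0)\cap\Omega}|\nabla h|^p\Big)^{n/p}\,|B_r(x_0)\cap\Omega|^{1-n/p}\le C\,r^{n(1-n/p)} .
\]
The objective is to show $\phi(r)\le C r^{n\alpha}$ with $\alpha=\alpha(Q,n,p)$; combined with the interior growth from Theorem~\ref{thm:main theorem} this yields the uniform bound $\int_{B_s(x)\cap\Omega}|\nabla u|^n\le C s^{n\alpha}$ for all $x$ in a neighbourhood of $\partial\Omega$, and hence the asserted H\"older continuity.

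To produce the comparison map I would exploit the NPC (i.e.\ $\mathrm{CAT}(0)$) geometry of $X$. Fix a cut-off $\eta\in\mathrm{Lip}(\R^n,[0,1])$ with $\eta\equiv1$ on $B_{r/2}(x_0)$, $\eta\equiv0$ off $B_r(x_0)$, and $|\nabla\eta|\le C/r$, and define $v(x)=\gamma_x(\eta(x))$, where $\gamma_x\colon[0,1]\to X$ is the constant-speed geodesic from $u(x)$ to $h(x)$. Convexity of the distance between two geodesics in an NPC space, together with the identity $d(\gamma_x(s),\gamma_x(t))=|s-t|\,d(u(x),h(x))$, yields the pointwise density estimate
\[
|\nabla v|\le(1-\eta)\,|\nabla u|+\eta\,|\nabla h|+d(u,h)\,|\nabla\eta|\qquad\text{a.e. in }\Omega .
\]
Because $u$ and $h$ have the same trace, $v$ has the same trace as $u$ on $\partial\Omega$ and $v=u$ outside $B_r(x_0)$, so $v$ is an admissible competitor; after the routine extension of Definition~\ref{def:quasi-n-harmonic} to boundary-touching sets with trace-matching competitors, quasi-minimality on $\Omega'=B_r(x_0)\cap\Omega$ gives $\phi(r)\le Q\,E_n(v;B_r(x_0)\cap\Omega)$.

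Integrating the density estimate and using that $\nabla\eta$ is supported in the annular region $A=\big(B_r(x_0)\setminus B_{r/2}(x_0)\big)\cap\Omega$, where the vanishing of $1-\eta$ on $B_{r/2}(x_0)$ also confines the $|\nabla u|$-contribution to $A$, I obtain
\[
E_n(v;B_r(x_0)\cap\Omega)\le C\Big[\int_A|\nabla u|^n\,dx+\psi(r)+r^{-n}\int_A d(u,h)^n\,dx\Big].
\]
The transition cost $r^{-n}\int_A d(u,h)^n$ is where the Lipschitz boundary enters: the real-valued function $d(u,h)$ vanishes on $\partial\Omega\cap A$ and satisfies $|\nabla d(u,h)|\le|\nabla u|+|\nabla h|$, so a Friedrichs inequality on $A$---with a constant that is scale invariant thanks to the Lipschitz character of $\Omega$---gives $\int_A d(u,h)^n\le C r^n\int_A(|\nabla u|^n+|\nabla h|^n)$. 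Keeping this term on the annulus is essential, for it permits the hole-filling (Widman) trick: substituting back and writing $\phi(r)-\phi(r/2)=\int_A|\nabla u|^n$ leads to $\phi(r)\le CQ\big[(\phi(r)-\phi(r/2))+\psi(r)\big]$, that is
\[
\phi(r/2)\le\theta\,\phi(r)+\psi(r),\qquad \theta=1-\tfrac{1}{CQ}\in(0,1).
\]
With $\psi(r)\le C r^{n(1-n/p)}$ from the first step, the standard iteration lemma (cf.\ \cite{Giaquinta-Book}) yields $\phi(r)\le C r^{n\alpha}$ with $\alpha=\min\{1-\tfrac{n}{p},\ \tfrac1n\log_2\tfrac1\theta\}$, depending only on $Q,n,p$ (with an arbitrarily small loss should the two exponents coincide).

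Finally, the boundary growth together with the interior growth of Theorem~\ref{thm:main theorem} upgrades to a uniform Morrey growth of $|\nabla u|^n$ on balls centred in a neighbourhood of $\partial\Omega$, and the Morrey embedding for metric-space-valued Sobolev maps---the same mechanism proving Theorem~\ref{thm:main theorem}---converts this into $\alpha$-H\"older continuity there. I expect the main obstacle to be the third step: establishing the NPC interpolation estimate for $|\nabla v|$ and, above all, localizing the transition cost $\int_A d(u,h)^n$ to the annulus with a scale-invariant Friedrichs constant, since it is precisely this localization---powered by the Lipschitz regularity of $\partial\Omega$---that makes the hole-filling close and produces a decay exponent independent of the point $x_0$.
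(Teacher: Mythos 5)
Your proposal is correct in its essentials, but it follows a genuinely different route from the paper. The paper flattens $\partial\Omega$ by a bi-Lipschitz change of coordinates, compares $u$ with the geodesic interpolation $u_\eta=``(1-\eta)u+\eta h"$ using the Korevaar--Schoen pullback-tensor inequalities of \cite{ks93}, derives a Caccioppoli-type estimate and from it a \emph{boundary reverse H\"older inequality}, and then applies Gehring's lemma to conclude $u\in W^{1,q}$ up to the boundary for some $q>n$, whence H\"older continuity follows from the Sobolev embedding for metric-valued maps (Remark~\ref{rmk:on good p}). You instead run the classical Widman hole-filling scheme: the same comparison map, but with the gradient estimate obtained directly from CAT(0) convexity of the distance between geodesics rather than from the tensor calculus; a scale-invariant Friedrichs inequality on boundary annuli (valid because $d(u,h)\in W^{1,n}_0(\Omega)$ extends by zero and the complement of a Lipschitz domain has positive density in every annulus centered on $\partial\Omega$); and the dyadic iteration $\phi(r/2)\le\theta\phi(r)+Cr^{n(1-n/p)}$, yielding Morrey decay of the energy and hence H\"older continuity via the Morrey embedding. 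Your route avoids Gehring at the boundary, is more elementary there, and gives an explicit exponent; the paper's route yields the stronger conclusion of higher integrability of the energy density up to the boundary and handles interior and near-boundary balls uniformly within one two-case argument.

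Two caveats. First, both you and the paper apply quasi-minimality to competitors that differ from $u$ on sets touching $\partial\Omega$, whereas Definition~\ref{def:quasi-n-harmonic} only allows relatively compact $\Omega'\subset\Omega$; passing to trace-matching competitors requires an approximation argument (e.g.\ a further geodesic interpolation with cutoffs controlled by Hardy's inequality for $W^{1,n}_0$ on Lipschitz domains). You call this ``routine'' and the paper is silent on it, so it is not a gap relative to the paper's own standard, but it is a real step. Second, your final assembly invokes ``the interior growth from Theorem~\ref{thm:main theorem}'', yet that theorem as stated gives only H\"older continuity of $u$, which does not imply energy decay; you must either extract the interior decay $\int_{B_s(x)}|\nabla u|_n\,dx\le C(s/R)^{\gamma}\int_{B_R(x)}|\nabla u|_n\,dx$ from the higher integrability established in its proof, or, more in the spirit of your argument, run the same hole-filling at interior points using the Poincar\'e inequality on annuli with a near-optimal center $p_0$; either repair is straightforward. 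Finally, note that in both proofs the exponent $\alpha$ in fact also depends on the Lipschitz character of $\partial\Omega$ (in the paper through the bi-Lipschitz flattening, which alters $Q$; in yours through the Friedrichs constant), a dependence suppressed in the statement of Theorem~\ref{thm:boundary regularity}.
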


Above, the trace of $u\in W^{1,n}(\Omega,X)$ coincides with the trace of $g\in W^{1,p}(\Omega,X)$ is equivalent to the requirement that $d(u,g)\in W^{1,n}_0(\Omega)$ (see e.g.~\cite[Section 1.12]{ks93}). We would like to point out that in~\cite[Theorem 1.4]{lw16}, the trace of $u$ was required to be Lipschitz continuous, which is a little bit stronger than what we have assumed in Theorem~\ref{thm:boundary regularity}.

\subsection{Outline of proof}\label{subsec:outline of proof}
 Our main tool to prove Theorem~\ref{thm:main theorem} and Theorem~\ref{thm:boundary regularity} is the  reverse H\"older inequality, which was discovered by Gehring \cite{Gehring-1973-Acta} in his celebrated work on higher regularity of quasiconformal mappings and was later developed by  Giaquinta and Modica \cite{Giaquinta-Modica-1979-JRAM} (see also \cite{Giaquinta-Book}) and many others in the theory of  elliptic partial differential equations. In this note, we will use the following local type reverse H\"older inequality; see Proposition 5.1 of  \cite{Giaquinta-Modica-1979-JRAM} or  Proposition 1.1 of Chapter V of \cite{Giaquinta-Book}.

Denote by $Q_R\subset\R^n$ a cube with side-length $R$ and let $q>1$. Let $g\in L^q_{\rm loc}(Q_1)$ and $f\in L^r_{\rm loc}(Q_1)$ ($r>q$) be two nonnegative functions.  Suppose there exist constants $b>1$ and $\theta\in [0,1)$, such that   for every $x_0\in Q_1$ and $2R<\dist(x_0,\partial Q_1)$ the following estimate holds
\begin{equation}\label{eq:key estimate}
\dashint_{Q_R(x_0)}g^q{d}x\le b\left\{ \left(\dashint_{Q_{2R}(x_{0})}g{ d}x\right)^{q}+\dashint_{Q_{2R}(x_{0})}f^{q}{ d}x\right\} +\theta\dashint_{Q_{2R}(x_{0})}g^{q}{ d}x,
\end{equation}
where $\dashint_A f{\rm d}x:=|A|^{-1}\int_A f{\rm d}x$. Then, there exist $\ep>0$ and $C>0$, depending only on $\theta, b,q,n$, such that $g\in L^p_{\rm loc}(Q)$ for $p\in [q,q+\ep)$ and
\begin{equation}\label{eq: RHI}\left(\dashint_{Q_{1/2}}g^p { d}x\right)^{1/p}\le C\left\{ \left(\dashint_{Q}g^q {d}x\right)^{1/q} + \left(\dashint_{Q}f^p { d}x\right)^{1/p}  \right\}.\end{equation}

In our arguments, to derive estimates of type ~\eqref{eq:key estimate} with suitable choices $g$ and $f$, we will borrow the idea from \cite{gg84}. More precisely, we will compare $u$ with ``$u_\eta=(1-\eta)u+\eta p_0$" for $0\le \eta\le 1$, which is well-defined in NPC spaces. The main new ingredient in our proof is certain new estimates on  pullback tensors.

Our notations are rather standard. We will use $C$ or $c$ to denote various  constants that may be different from line to line.

\section{Preliminaries}

\subsection{Sobolev mappings with value in metric spaces}\label{subsec:Sobolev maps}
Let $\Omega$ be a domain in $\R^n$ and $(X,d)$  a metric space. We follow  Korevaar and Schoen \cite{ks93} to define Sobolev mappings $u\colon \Om\to X$. Given $p>1$, $\epsilon>0$, we define
\[e_{p,\epsilon}^u(x)=\dashint_{B_{\epsilon}(x)} \frac{d^p(u(x),u(y))}{\epsilon^{p}}dy\]
for all $x\in \Omega_{\varepsilon}:=\{z\in \Omega: d(z,\partial \Omega)>\varepsilon \}$ and $e_{p,\epsilon}^u(x)=0$ for all $x\in \Omega\backslash \Omega_\varepsilon$. For each $u\in L^p(\Om,X)$, we define the approximate energy
\[E_{p,\epsilon}^u(f)=c(n,p)\int_{\Om}f(x)e_{p,\epsilon}^u(x){ d}x,\qquad f\in C_c(\Om),\]
where  $C_c(\Om)$ consists of continuous functions in $\Om$ with compact support and $c(n,p)>0$ is a normalization constant. Then, $u$ is said to have finite $p$-energy, written as $u\in W^{1,p}(\Om,X)$, if
\[E_{p}(u)\equiv \sup_{f\in C_c(\Om),0\le f\le 1}\limsup_{\epsilon\to 0}E_{p,\epsilon}^u(f)<\infty.\]

By \cite[Theorem 1.5.1]{ks93}, if $u\in W^{1,p}(\Omega,X)$, then the measures $e^u_{p,\varepsilon}dx$ converges weakly as $\varepsilon\to 0$ to an energy density measure $de^u_p$ with total measure $E_p(u)$. Moreover, by~\cite[Theorem 1.10]{ks93}, $de^u_p$ is absolutely continuous with respect to the Lebesgue measure. In particular, there exists $|\nabla u|_p\in L^1(\Omega,\R)$ such that
$$de_p^u=|\nabla u|_pdx.$$
When $p=2$, we write $|\nabla u|^2$ instead of $|\nabla u|_2$. Note that in general
$$|\nabla u|_p\neq \big(|\nabla u|^2\big)^{p/2}$$
but they are comparable up to a uniform constant.

There are also many other equivalent definitions of metric-valued Sobolev spaces and we recommend the interested readers to~\cite{hkst12} for more information. As a special consequence of the equivalence with Newtonian-Sobolev spaces, we have the following Sobolev-Poincar\'e inequality for Sobolev mappings: 
\begin{lemma}\label{lemma:Sobolev embedding}
For  $1<p<n$, there exists a positive constant $c(n,q,p)$ such that for each $p<q<p^*=\frac{np}{n-p}$
\begin{equation}\label{eq:Sobolev Poincare}
\inf_{a\in X}\Big(\dashint_B d^{q}(u,a)dx\Big)^{1/q}\leq c(n,p,q)\diam B\Big(\dashint_{B}|\nabla u|_p dx\Big)^{1/p}
\end{equation}
holds for every $u\in W^{1,p}(\Omega,X)$ and  every ball $B$ with $4B\ssub\Omega$.
\end{lemma}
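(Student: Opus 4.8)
The plan is to reduce the statement to the classical scalar Sobolev--Poincar\'e inequality on Euclidean balls, exploiting the identification of the Korevaar--Schoen space $W^{1,p}(\Om,X)$ with the Newtonian--Sobolev space $N^{1,p}(\Om,X)$ (see \cite{hkst12}). Under this identification $u$ admits a minimal $p$-weak upper gradient $g_u\in L^p(\Om)$, and since $\int g_u^p\,dx$ represents the same $p$-energy as $\int|\nabla u|_p\,dx$, the two densities are comparable up to a dimensional constant; in particular $\int_B g_u^p\,dx\simeq \int_B|\nabla u|_p\,dx$ for every ball $B$ with $4B\ssub\Om$, the enlargement $4B\ssub\Om$ serving only to guarantee that the density is defined on a neighbourhood of $B$.

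First I would record the scalar input. For a fixed basepoint $a\in X$ the real-valued function $f_a(x):=d(u(x),a)$ satisfies $|f_a(x)-f_a(y)|\le d(u(x),u(y))$, so that $g_u$ is again an upper gradient of $f_a$; thus $f_a\in N^{1,p}(\Om)$ with $|\nabla f_a|\le g_u$ almost everywhere. The point is that composing $u$ with the $1$-Lipschitz map $d(\,\cdot\,,a)$ does not enlarge the upper gradient.

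Next I would invoke the Sobolev--Poincar\'e inequality for metric-space valued Newtonian maps. Since $(\R^n,|\cdot|,\mathcal L^n)$ is Ahlfors $n$-regular and supports a $(1,p)$-Poincar\'e inequality on balls with dilation constant $1$, the self-improving theory of \cite{hkst12} yields, for every such $B$,
\begin{equation*}
\inf_{a\in X}\Big(\dashint_B d^{p^*}(u,a)\,dx\Big)^{1/p^*}\le c(n,p)\diam B\Big(\dashint_B g_u^p\,dx\Big)^{1/p}.
\end{equation*}
The appearance of the infimum over $a\in X$, in place of a subtracted mean value, is the only genuinely metric feature of the estimate, and I expect it to be the main obstacle: it cannot be produced by a coordinatewise reduction to the functions $f_a$ and relies on the Haj\l asz--Koskela chaining argument, which furnishes a distinguished basepoint on each ball. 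No dilation of $B$ is needed because Euclidean balls are Poincar\'e balls with constant $1$.

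Finally, for $p<q<p^*$ the monotonicity of $L^q$-averages gives $(\dashint_B d^{q}(u,a)\,dx)^{1/q}\le(\dashint_B d^{p^*}(u,a)\,dx)^{1/p^*}$ for every $a\in X$; taking the infimum over $a$, combining with the displayed endpoint estimate, and replacing $\int_B g_u^p\,dx$ by the comparable quantity $\int_B|\nabla u|_p\,dx$ (absorbing the dimensional factor into $c(n,p,q)$) yields the claim.
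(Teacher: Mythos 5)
Your argument stands or falls on the displayed estimate
\begin{equation*}
\inf_{a\in X}\Big(\dashint_B d^{p^*}(u,a)\,dx\Big)^{1/p^*}\le c(n,p)\diam B\Big(\dashint_B g_u^p\,dx\Big)^{1/p},
\end{equation*}
which you cite as a black box from ``the self-improving theory'' of the Heinonen--Koskela--Shanmugalingam--Tyson monograph, and this is a genuine gap. The Sobolev--Poincar\'e/Haj{\l}asz--Koskela theorems in that reference are formulated for \emph{real-valued} function--upper-gradient pairs and control $|u-u_B|$ with the mean subtracted; no statement there gives a metric-space target with an infimum over basepoints $a\in X$, let alone at the endpoint exponent $p^*$. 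Producing that infimum is precisely the nontrivial metric content of the lemma --- as you yourself note, it cannot be obtained by composing with the scalar functions $d(\cdot,a)$ alone, since $X$ has no mean value $u_B$ to subtract --- so the step you flag as ``the main obstacle'' is resolved only by appeal to a theorem that does not exist in the cited form. In effect you assume the conclusion, and in a form \emph{stronger} than the lemma: the lemma claims only $q<p^*$, and the paper's own remark following it explicitly says the endpoint $q=p^*$ requires an additional Haj{\l}asz--Koskela truncation argument whose proof the authors decline to include; their actual proof (following Koskela--Shanmugalingam--Tyson) yields only the subcritical range, which is why the statement carries a strict inequality.

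For comparison, the paper fills this hole by embedding $X$ isometrically into $\ell^\infty(X)$, using linear functionals and a telescoping argument to obtain the pointwise Haj{\l}asz-type inequality $\|u(x)-u(y)\|\le c\,|x-y|\,(M_{2|x-y|}|\nabla u|_p(x)^{1/p}+M_{2|x-y|}|\nabla u|_p(y)^{1/p})$, and then running a maximal-function and distribution-function (Cavalieri) argument; the basepoint enters through the Banach-space average $u_B\in\ell^\infty(X)$, and this route is exactly what loses the endpoint exponent. Your own scalar observation could in fact be upgraded to close the gap without any embedding: for a.e.\ pair $(x,y)\in B\times B$ the segment $[x,y]$ satisfies the upper gradient inequality, so $\dashint_B\dashint_B d(u(x),u(y))\,dx\,dy\le c(n)\diam B\dashint_B g_u\,dx$ by the usual Riesz-potential estimate; by pigeonhole there is $y_0\in B$ with $\dashint_B d(u(x),u(y_0))\,dx$ bounded by that quantity, and applying the classical Euclidean Sobolev--Poincar\'e inequality to $f_{u(y_0)}=d(u,u(y_0))$ (whose weak gradient is at most $g_u$) then bounds $\big(\dashint_B f_{u(y_0)}^{q}\,dx\big)^{1/q}\le\big(\dashint_B |f_{u(y_0)}-(f_{u(y_0)})_B|^{q}\,dx\big)^{1/q}+(f_{u(y_0)})_B$ as required. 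As written, however, the proposal contains no proof of the key inequality.
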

%See~\cite[Section 2.5]{ks93} for the existence of $u_B$. 

\begin{proof}
The proof is essentially the same as~\cite[Proof of Theorem 3.6]{kst04} (whereas the idea dates back to~\cite{hkst01}). For the convenience of the readers, we include the main steps here. Embed $X$ isometrically in the Banach space $\mathbb{V}=l^\infty(X)$ such that $X\subset \mathbb{V}=(\mathbb{V},\|\cdot\|)$. Let $\Lambda\in V^*$ be such that $L:=\|\Lambda\|_{V^*}\leq 1$. Then $\Lambda\colon \mathbb{V}\to \R$ is an $L$-Lipschitz map and $f:=\Lambda\circ u\in W^{1,p}(\Omega)$. Moreover, the standard Sobolev-Poincar\'e inequality for $f$ implies that for each $q<p^*$,
\begin{equation*}
\dashint_{B}|f-f_{B}|^{q}dx\leq c(n,p,q)L^{q}\big(\diam B\big)^{q}\Big(\dashint_{B}|\nabla u|_p dx\Big)^{q/p},
\end{equation*}
where we have also used the fact that
$$|\nabla f|^p\leq \big(\Lip \Lambda\big)^p\cdot g_u\leq c(n,p)L^p|\nabla u|_p$$
holds almost everywhere on $B$, where $g_u$ is the minimal $p$-weak upper gradient of $u$ (see~\cite{hkst12} for precise definition) (and the last inequality comes from the equivalence of Newtonian-Sobolev spaces~\cite[Section 10.4]{hkst12}).

If $x,y$ are Lebesgue points of $f$, then letting $B_0=B(x,2|x-y|)$, $B_i=B(x,2^{-i}|x-y|)$, $B_{-i}=B(y,2^{-i}|x-y|)$ and using the standard telescoping argument, we obtain the following useful pointwise inequality for $f$:
\begin{align*}
|\Lambda\circ u(x)-\Lambda\circ u(y)|&\leq \sum_{i\in \mathbb{Z}}|f_{B_i}-f_{B_{i+1}}|\\
&\leq c(n,p)L|x-y|\Big(M_{2|x-y|}|\nabla u|_p(x)^{1/p}+M_{2|x-y|}|\nabla u|_p(y)^{1/p}\Big),
\end{align*}
where
$$M_{R}|\nabla u|_p(x)=\sup_{0<r<R}\dashint_{B(x,r)}|\nabla u|_p(z                                                                                                                                                                                                                                                                                                                                                                                                                                                                                                                                                                                                                                                                                                                                                                                                                                                                                                                                                                                                                                                                                                                                                                                                                                                                                                                                                                                                                                                                                                                                                                                                                                                                                                                                                                                                                                                                                                                                                             )dz$$
is the standard restricted maximal function of $|\nabla u|_p$.

The next step is to show that for almost every $x,y\in B$ we have 
\begin{equation}\label{eq:Haljasz inequality}
\|u(x)-u(y)\|\leq c(n,p)|x-y|\Big(M_{2|x-y|}|\nabla u|_p(x)^{1/p}+M_{2|x-y|}|\nabla u|_p(y)^{1/p}\Big).
\end{equation}
In this step, one can follow the arguments used in~\cite[the last paragraph in the proof of Theorem 3.6]{kst04} word by word. In fact, only the previous pointwise inequality for $f$ is needed.

The final step is to show that the pointwise inequality~\eqref{eq:Haljasz inequality} implies the following Sobolev-Poincar\'e inequality: for each $q<p^*$,
\begin{align}\label{eq:aim}
\inf_{a\in Y}\dashint_{B}d(u(x),a)^{q}dx\leq c(n,p,q)\big(\diam B\big)^{q}\Big(\dashint_{B}|\nabla u|_p dx\Big)^{q/p}.
\end{align}
The proof of this is more or less well-known (see~\cite[Proof of Proposition 3.12]{kst04} or the monograph~\cite[Section 9.1]{hkst12}) and we include a sketch here for the convenience of the readers. First, note that~\eqref{eq:Haljasz inequality} together with H\"older's inequality implies that
\begin{equation}
\dashint_{B}\|u(x)-u_B\|dx\leq c(n,p,q)\diam B \Big(\dashint_{B}M_{2\diam B}|\nabla u|_pdx\Big)^{1/p}.
\end{equation}
Fix $0<\varepsilon<1$ (to be determined later), and, for $t>0$, let 
$$A_t=\big\{x\in B: \|u(x)-u_B\|>t\big\}. $$
For each $i\in \mathbb{N}$, set $B_i=\{z\in \Omega: |x-z|<2^{-i}\diam B \}$. It is clear that $B_i\subset 2B$. At each Lebesgue point $x\in A_t$ of the map $u\colon X\to Y\subset \mathbb{V}$, we have
\begin{align*}
C(\varepsilon)t\sum_{i\in \mathbb{N}}2^{-i(1-\varepsilon)}&=t<\|u(x)-u_B\|\leq \sum_{i\in \mathbb{N}}\|u_{B_{i+1}}-u_{B_i}\|\\
&\leq c(n,p,q)\dashint_{B_i}\|u(z)-u_{B_i}\|dz\\
&\leq c(n,p,q)\diam B\sum_{i\in \mathbb{N}}2^{-i}\Big(\dashint_{B_i}M_{2\diam B_i}|\nabla u|_pdx\Big)^{1/p}.
\end{align*}
Hence there exists a positive integer $i_x$ such that
$$C(\varepsilon)t2^{-i_x(1-\varepsilon)}\leq c(n,p,q)\diam B\cdot 2^{-i_x}\Big(\dashint_{B_{i_x }}M_{2\diam B_{i_x}}|\nabla u|_pdx\Big)^{1/p},$$
or equivalently,
$$\mathcal{L}^n(B_{i_x})\leq c(n,p,q)^s\big(\frac{\diam B}{t}\big)^{ps}\frac{\Big(\int_{B_{i_x }}M_{2\diam B_{i_x}}|\nabla u|_pdx\Big)^s}{\mathcal{L}^n(B)^{s-1}}, $$
where $s=\frac{n}{n-p\varepsilon}>1$ and $\varepsilon<1$ is a fixed small number. Using the $(5B)$-covering lemma, we easily obtain that
$$\mathcal{L}^n(A_t)\leq c(n,p,q)^s\big(\frac{\diam B}{t}\big)^{ps}\mathcal{L}^n(B)^{1-s}\Big(\int_{4B}M_{4\diam B}|\nabla u|_pdx\Big)^s.$$
Set 
$$C_0:= c(n,p,q)^s\big(\diam B\big)^{ps}\Big(\int_{4B}M_{4\diam B}|\nabla u|_pdx\Big)^s.$$ Then
$\mathcal{L}^n(A_t)\leq C_0\frac{\mathcal{L}^n(B)^{1-s}}{t^{ps}}$ and an easy application of the Cavalier's principle (see \cite[Proof of Lemma 3.23]{kst04}) gives 
\begin{equation*}
\int_{B}\|u(x)-u_B\|^qdx\leq C_0^{\frac{q}{ps}}\big(\frac{q}{q-ps}\big)^{q/(ps)}\mathcal{L}^n(B)^{1-\frac{q}{p}},
\end{equation*}
which reduces to the desired inequality~\eqref{eq:aim} upon noticing the $L^p$-boundedness of the maximal operator.

\end{proof}
 
\begin{remark}\label{rmk:on good p}
In the case $1<p<n$, Lemma~\eqref{lemma:Sobolev embedding} holds with $q=p^*$ as well. This can be proved by a truncation argument due to Hajlasz and Koskela. Since this stronger case is not needed for the current paper, we do not include the proof here. 

In the borderline case $p=n$, one can similarly prove that
$$\dashint_B\exp\Big(\Big(\frac{d(u,u_B)}{c_1(n)\diam B(\dashint_B |\nabla u|_ndx)^{1/n} } \Big)^{n/(n-1)} \Big)\leq c_2(n)$$ for  $u\in W^{1,n}(\Omega,X)$.

In the case $p>n$, one can similarly prove that each $u\in W^{1,p}(\Omega,X)$  has a locally H\"older continuous representative (see also~\cite[Proposition 3.3]{lw17}).
\end{remark}

\subsection{Metric spaces with non-positive curvature in the sense of Alexandrov}
\begin{definition}[NPC spaces]
A complete metric space $(X,d)$
(possibly infinite dimensional) is said to be non-positively curved in the sense of Alexandrov
(NPC) if the following two conditions are satisfied:

\begin{itemize}
	\item $(X,d)$ is a length space, that is, for any two points $P,Q$ in
	$X$, the distance $d(P,Q)$ is realized as the length of a rectifiable
	curve connecting $P$ to $Q$. (We call such distance-realizing curves
	geodesics.)
	\item For any three points $P,Q,R$ in $X$ and choices of geodesics $\gamma_{PQ}$
	(of length $r$), $\gamma_{QR}$ (of length $p$), and $\gamma_{RP}$ (of
	length $q$) connecting the respective points, the following comparison
	property is to hold: For any $0<\lambda<1$, write $Q_{\lambda}$ for the
	point on $\gamma_{QR}$ which is a fraction $\lambda$ of the distance from
	$Q$ to $R$. That is,
	\[
	d(Q_{\lambda},Q)=\lambda p,\quad d(Q_{\lambda},R)=(1-\lambda)p.
	\]
	On the (possibly degenerate) Euclidean triangle of side lengths $p,q,r$
	and opposite vertices $\bar{P}$, $\bar{Q},\bar{R}$, there is a corresponding
	point
	\[
	\bar{Q}_{\lambda}=\bar{Q}+\lambda(\bar{R}-\bar{Q}).
	\]
	The NPC hypothesis is that the metric distance $d(P,Q_{\lambda})$ (from
	$Q_{\lambda}$ to the opposite vertex $P$) is bounded above by the Euclidean
	distance $|\bar{P}-\bar{Q}_{\lambda}|$. This inequality can be written
	precisely as
	\[
	d^{2}(P,Q_{\lambda})\le(1-\lambda)d^{2}(P,Q)+\lambda d^{2}(P,R)-\lambda(1-\lambda)d^{2}(Q,R).
	\]
\end{itemize}
\end{definition}

In an NPC space $X$, geodesics connecting each pair of points are unique and so one can define the $t$-fraction mapping $u_t$ of two mapping $u_0,u_1\colon \Omega\to X$ as $u_t=``(1-t)u_0+tu_1"$, that is, for each $x$, $u_t(x)$ is the unique point $P$ on the geodesic connecting $u_0(x)$ and $u_1(x)$ such that $d(P,u_0(x))=td(u_0(x),u_1(x))$ and $d(P,u_1(x))=(1-t)d(u_0(x),u_1(x))$. We refer the interested readers to~\cite[Section 2.1]{ks93} for more discussions on NPC spaces.

\subsection{Pullback tensors}
Let $\Om\subset \R^n$ be a domain and $X$ an NPC space. For each $u\in W^{1,2}(\Om, X)$ and for any Lipschitz vector fields $Z,W$ on $\overline{\Om}$,  $u$ induces an integrable directional energy functional $|u_*(Z)|^2$, and moreover, Korevaar and Schoen \cite[Lemma 2.3.1]{ks93} proved the following important parallelogram identity
\[|u_{\ast}(Z+W)|^2+|u_{\ast}(Z-W)|^2=2|u_{\ast}(Z)|^2+2|u_{\ast}(W)|^2.\]  This property induces a pullback tensor $\pi=\pi_u$ over the  Lipschitz vector fields on  $\overline{\Om}$ by setting
\[\pi(Z,W)=\frac 14 |u_{\ast}(Z+W)|^2- \frac 14 |u_{\ast}(Z-W)|^2.\] It was proved in \cite[Theorem 2.3.2]{ks93}) that $\pi$  is continuous, symmetric, bilinear, nonnegative and tensorial. The pullback tensor generalizes the classical pullback metric $u^*h$ for mappings into Riemannian manifolds $(N,h)$ and plays a fundamental role in understanding the structure of harmonic mappings to NPC spaces in~\cite{ks93}.

\section{Proof of main results}
First we derive the interior H\"older continuity for quasi-$n$-harmonic mappings.

\begin{proof}[Proof of Theorem~\ref{thm:main theorem}]
%We now turn to the proof of Theorem~\ref{thm:main theorem}.
	
Let $B_R\ssub \Omega$ be a ball of radius $R$ and let $\frac{R}{2}<t<s<R$. Let $\eta\in C_0^\infty(B_R)$ be such that $0< \eta<1$ on $B_s\backslash \bar{B}_t$, $\eta\equiv 0$ outside $B_s$, $\eta\equiv 1$ on $\bar{B}_t$, $|D\eta|\leq c(s-t)^{-1}$, $\{\eta=\frac{1}{2}\}$ has zero Lebesgue measure and $\int_{B_R}\frac{1}{(1-2\eta(x))^4}dx<\infty$.
We will compare the $n$-energy of $u$ with the function $u_\eta:=``(1-\eta(x))u(x)+\eta(x)p_0"$, where $p_0$ is chosen such that
$$\Big(\dashint_{B_R}d^n(u(x),p_0)dx\Big)^{1/n}\leq 2\inf_{a\in X}\Big(\dashint_{B_R}d^n(u(x),a)dx\Big)^{1/n}.$$
%the unique point that minimizes
%$$I_u(Q):=\int_{B_R}d^2(u,Q)dx;$$
%see \cite[Section 2.5]{ks93}.

We now show that for any smooth function $\eta\in C_0^\infty(\Omega)$ which satisfies either $0\leq \eta<\frac{1}{2}$ or $\frac{1}{2}<\eta\leq 1$, it holds
\begin{equation}\label{eq:*}
\begin{aligned}
\pi_{u_\eta}&\leq \pi_{u}-\mathcal{C}(u_0,p_0,\eta)-\nabla\eta\otimes \nabla d^2(u,p_0)+Q(\eta,\nabla \eta)-\pi_{u_{1-\eta}}\\
&\leq (1-\eta)\pi_{u}+C|\nabla \eta|d(u,p_0)|\nabla u|_1-\nabla\eta\otimes \nabla d^2(u,p_0)+Q(\eta,\nabla\eta),
\end{aligned}\end{equation}
where $$\mathcal{C}(u,p_0,\eta)=\pi_u-P(u,p_0,\eta)-P(u,p_0,1-\eta)$$ is the auxiliary tensor defined in (2.4xiv) of~\cite{ks93} and
$$Q(\eta,\nabla \eta)=C\frac {|\nabla \eta(x)|^2 d^2(u(x),p_0)}{(1-2\eta(x))^2}$$ is quadratic in terms of $\eta$ and $\nabla \eta$ defined as in~\cite[Lemma 2.4.2]{ks93}, for some constant $C$ which may be different from line to line. For this, we first prove the following estimate:
\begin{equation}\label{eq:1}
	\pi_{u_{1-\eta}}\geq \eta\pi_u-\mathcal{C}(u,p_0,\eta)-C|\nabla \eta|d(u,p_0)|\nabla u|_1-Q(\eta,\nabla \eta).
\end{equation}
We first consider the case $0\leq \eta<\frac{1}{2}$. In this case, by  \cite[(2.4xvii)]{ks93}, on $\{\eta>0\}$ we have
\begin{align*}
\pi_{u_{1-\eta}}\geq P(u,p_0,1-\eta)-C|\nabla \eta|d(u,p_0)|\nabla u|_1-Q(\eta,\nabla \eta),
\end{align*}
where $C>0$ depends on $n$ and $P(u,p_0,1-\eta)$ is a symmetric bilinear integrable tensor defined in~\cite[Lemma 2.4.4]{ks93}.
Moreover, by~\cite[(2.4xvi)]{ks93}, we have
$$\eta\pi_u-\mathcal{C}(u,p_0,\eta)\leq P(u,p_0,1-\eta),$$
from which \eqref{eq:1} follows. The desired equation~\eqref{eq:*} follows by combining \eqref{eq:1} with~\cite[(2.4xv)]{ks93}. The case $\frac{1}{2}<\eta\leq 1$ can be proved similarly. Indeed, the only difference in this case would be the equation (2.4vii) of~\cite{ks93}, where one needs to replace the negative term $1-2\eta(y)$ by the positive term $2\eta(y)-1$. Then the equations (2.4xv), (2.4xvi) and (2.4xvi) hold (with the same proof as in~\cite[Lemma 2.4.5]{ks93}). Consequently, \eqref{eq:*} holds by the same arguments as in the first case. Sine $u\in W^{1,n}(B_R,X)$, we have $d(u,p_0)\in L^{2n}(B_R)$ (indeed, in any $L^p$ with $p<\infty$), this together with our choice that $\frac{1}{(1-2\eta)^2}\in L^2(B_R)$, implies that $Q(\eta,\nabla \eta)^\frac{n}{2}\in L^s(B_R)$ for some $s=s(n)>1$ (by the H\"older's inequality and by the proof of Lemma 2.4.2 in \cite{ks93}).

Set $\mu=n/2$. Taking trace on both sides of equation ~\eqref{eq:*}, (for simplicity and without any confusion, we denote the trace of $\pi$ by the same symbol), and then taking the $\mu$-th power on both sides, we get
\begin{align*}
	\pi_{u_\eta}^\mu\leq c(n)\Big( (1-\eta)^\mu\pi_{u}^\mu+|\nabla \eta|^\mu d^\mu(u,p_0)|\nabla u|_\mu-(\nabla \eta\otimes \nabla d^2(u,p_0))^\mu +Q(\eta,\nabla\eta)^\mu\Big).
\end{align*}
 Write  $g=Q(\eta,\nabla \eta)^\mu$. Then, $g\in L^s(B_R)$ for some $s>1$. Since $u$ is $Q$-quasi-$n$-harmonic,  we obtain from the above inequality that
\[
\begin{aligned}\frac{1}{c(n)} & \int_{B_{s}}(|\nabla u|^{2})^{\mu}dx\leq\int_{B_{s}}|\nabla u|_{n}dx\leq Q\int_{B_{s}}|\nabla u_{\eta}|_{n}dx\leq C\int_{B_{s}}\big(|\nabla u_{\eta}|^{2}\big)^{\mu}dx\\
 & \leq C\left(\int_{B_{s}}(1-\eta)^{\mu}(|\nabla u|^{2})^{\mu}dx+\int_{B_{s}}|\nabla\eta|^{\mu}\left(d^{\mu}(u,p_{0})|\nabla u|_\mu+|\nabla d^{2}(u,p_{0})|^{\mu}\right)dx+\int_{B_{s}}gdx\right)\\
 & \leq C\left(\int_{B_{s}-B_{t}}(|\nabla u|^{2})^{\mu}dx+\frac{1}{(s-t)^{\mu}}\int_{B_{R}}\left(d^{\mu}(u,p_{0})|\nabla u|_\mu+|\nabla d^{2}(u,p_{0})|^{\mu}\right)dx+\int_{B_{R}}gdx\right)\\
 & \leq C\Big(\int_{B_{s}-B_{t}}(|\nabla u|^{2})^{\mu}dx+\frac{1}{(s-t)^{\mu}}\int_{B_{R}}d^{\mu}(u,p_{0})|\nabla u|_{\mu}dx+\int_{B_{R}}gdx\Big)
\end{aligned}
\] for some constant $C>0$.
By H\"older's inequality and Young's inequality, we have for any
$\epsilon>0$
\[
\frac{1}{(s-t)^{\mu}}\int_{B_{R}}d^{\mu}(u,p_{0})|\nabla u|_{\mu}dx\le\frac{C_{\epsilon}}{(s-t)^{n}}\int_{B_{s}}d^{n}(u,p_{0})dx+\epsilon\int_{B_{s}}(|\nabla u|^{2})^{\mu}dx.
\]
Thus, by taking $\epsilon$ suitably small, we derive from the above
that
\[
\begin{aligned}\int_{B_{s}}(|\nabla u|^{2})^{\mu}dx  \le C_{0}\int_{B_{s}-B_{t}}(|\nabla u|^{2})^{\mu}dx
  +C_{0}\left\{ \frac{1}{(s-t)^{n}}\int_{B_{s}}d^{n}(u,p_{0})dx+\int_{B_{R}}gdx\right\}
\end{aligned}
\]
holds for some $C_{0}>0$. Adding $C_{0}\int_{B_{t}}(|\nabla u|^{2})^{\mu}dx$
on both sides of the above inequality and then dividing by $1+C_{0}$,
we deduce
\[
\begin{aligned}\int_{B_{t}}(|\nabla u|^{2})^{\mu}dx \le\theta\int_{B_{s}}(|\nabla u|^{2})^{\mu}dx +C\left\{ \frac{1}{(s-t)^{n}}\int_{B_{s}}d^{n}(u,p_{0})dx+\int_{B_{R}}gdx\right\}
\end{aligned}
\]
for some $C>0$, where $\theta=C_{0}/(1+C_{0})<1$.

Applying Lemma 3.2 of~\cite{gg84}, we obtain
\begin{equation}\label{eq: growth estimate}
\begin{aligned}\int_{B_{R/2}}(|\nabla u|^{2})^{\mu}dx & \le C\Big(R^{-n}\int_{B_{R}}d^{n}(u,p_{0})dx+\int_{B_{R}}gdx\Big).\end{aligned}
\end{equation}
By the Sobolev-Poincar\'e inequality (Lemma~\ref{lemma:Sobolev embedding}), for $\frac{n}{2}< q<n$, we get
\[
R^{-n}\int_{B_{R}}d^{n}(u(x),p_{0})dx\le C\Big(\int_{B_{R}}(|\nabla u|^{2})^{\frac{q}{2}}dx\Big)^{n/q}|B_{R}|^{1-\frac nq}.
\]
Combining  the above  estimate and \eqref{eq: growth estimate}, we infer that
\[
\int_{B_{R/2}}(|\nabla u|^{2})^{\mu}dx  \le C \Big\{\Big(\int_{B_{R}}(|\nabla u|^{2})^{\frac{q}{2}}dx\Big)^{n/q}|B_{R}|^{(q-n)/q}+\int_{B_{R}}gdx\Big\}.
\]
Now, set $w=\big((|\nabla u|^{2})^{\mu}\big)^{q/n}$, $\frac{n}{2}< q<n$.
It follows, for any $B_{R}\subset\subset\Om$,
that
\begin{equation}\label{eq: RHI for w}
\dashint_{B_{R/2}}w^{\frac{n}{q}}dx\leq C\left(\left(\dashint_{B_{R}}wdx\right)^{\frac{n}{q}}+\dashint_{B_{R}}gdx\right).
\end{equation}
As commented in Section~\ref{subsec:outline of proof}, the conclusion of  Theorem \ref{thm:main theorem}  follows from the local type reverse H\"older inequality \eqref{eq: RHI}. The proof is complete.
\end{proof}

Next we derive the boundary regularity for quasi-$n$-harmonic mappings. In \cite[Lemma 1]{Jost-Meier-83-MathAnn}, using boundary type reverse H\"older's inequality, Jost and Meier  improved the integrability of gradients of  local minima  for certain quadratic functionals $f:\Omega\times\mathbb{R}^N\times\mathbb{R}^{nN}$ close to the boundary. We will generalize the arguments in our setting.

\begin{proof}[Proof of Theorem~\ref{thm:boundary regularity}]
Let $x_0\in \partial \Omega$ and $V$ an open neighborhood of $x_0$. Since $\Omega$ is Lipschitz and quasi-$n$-harmonic mappings are stable under bi-Lipschitz transformations, we may perform a local bi-Lipschitz coordinate transformation such that $x_0$, $V\cap \Omega$ and $V\cap \partial \Omega$ get mapped onto $0$, $B_1^+$, and $\Gamma_1$, respectively.   Here
we  denote by $B_R^+$ the open half ball $\{x=(x_1,\cdots,x_n)\in \R^n:|x|<R,x_n>0 \}$ and $\Gamma_R=\{x=(x_1,\cdots,x_n)\in \R^n: |x|<R,x_n=0 \}$.
It suffices to show that $u\in W^{1,q}(B_{1/2}^+,X)$ for some $n<q=q(Q,n,p)\leq p$.

Fix any $R<1$. We will show that if $x\in B_R^+\cup \Gamma_R$ and $r<1-R$, then
\begin{equation}\label{eq: boundary RHI}
\begin{aligned}
	\Big(\dashint_{B_{r/2}(x)\cap B_R^+}|\nabla u|_{\tilde{p}}dx\Big)^{1/\tilde{p}}\leq c\Big(\dashint_{B_{r}(x)\cap B_R^+}|\nabla u|_ndx\Big)^{1/n}+c\left(\dashint_{B_{r}(x)\cap B_R^+}(|\nabla h|_p+g)dx\right)
\end{aligned}\end{equation} for some $\tilde{p}$ with $p>\tilde{p}>n$.

The argument is quite similar with that of Theorem~\ref{thm:main theorem} and so we only point out the differences. Fix $x_0\in B_{R}$ and $s,t,r$ with $0<t<s\le r<1-R$. Let $\eta\in C_0^\infty(\Omega)$ be such that $0< \eta<1$ on $B_s(x_0)\backslash \bar{B}_t(x_0)$, $\eta\equiv 0$ outside $B_s(x_0)$, $\eta\equiv 1$ on $\bar{B}_t(x_0)$, $|D\eta|\leq c(s-t)^{-1}$, $\{\eta=\frac{1}{2}\}$ has zero Lebesgue measure and $\int_{B_1}\frac{1}{(1-2\eta(x))^4}dx<\infty$. The only difference with the proof of Theorem~\ref{thm:main theorem} is that we will compare the mapping $u$ with the mapping $u_\eta=(1-\eta(x))u(x)+\eta(x)h(x)$.

By similar computation, we  obtain that for any smooth function $\eta\in C_0^\infty(\Omega)$ which satisfies either $0\leq \eta<\frac{1}{2}$ or $\frac{1}{2}<\eta\leq 1$, it holds
\begin{equation}\label{eq:**}
\pi_{u_\eta}\leq (1-\eta)\pi_{u}+\eta\pi_h+C|\nabla \eta|d(u,h)(|\nabla u|_1+|\nabla h|_1)-\nabla\eta\otimes \nabla d^2(u,h)+Q(\eta,\nabla\eta).
\end{equation}
Still set $\mu=n/2$. Taking the trace on both sides of~\eqref{eq:**} and then the $\mu$-th power on both sides, we get
\begin{equation*}\begin{aligned}
\pi_{u_\eta}^\mu & \leq c(n)\Big( (1-\eta)^\mu\pi_{u}^\mu+\eta^\mu|\nabla h|^\mu+|\nabla \eta|^\mu d^\mu(u,h)(\nabla u|_\mu+|\nabla h|_\mu)\\
&\quad-(\nabla \eta\otimes \nabla d^2(u,h))^\mu +Q(\eta,\nabla\eta)^\mu\Big).
\end{aligned}\end{equation*}
Using the $Q$-quasi-$n$-harmonic condition as in the previous proof, we obtain
\begin{equation*}\begin{aligned}
	\int_{B_s(x_0)}(|\nabla u|^2)^\mu dx &\leq C\Big(\int_{B_{s}(x_0)-B_{t}(x_0)}(|\nabla u|^{2})^{\mu}dx+\int_{B_s(x_0)}|\nabla h|^ndx\\
	&\quad+\frac{1}{(s-t)^{\mu}}\int_{B_{s}(x_0)}d^{\mu}(u,h)(|\nabla u|_{\mu}+|\nabla h|_\mu)dx
	+\int_{B_{s}(x_0)}gdx\Big).
\end{aligned}\end{equation*}
Applying the Young's inequality as before, we  deduce
\[
\begin{aligned}\int_{B_{t}(x_0)}(|\nabla u|^{2})^{\mu}dx &\le\theta\int_{B_{s}(x_0)}(|\nabla u|^{2})^{\mu}dx +C\int_{B_s(x_0)}|\nabla h|_n dx\\
&\quad+C\left\{ \frac{1}{(s-t)^{n}}\int_{B_{s}(x_0)}d^{n}(u,h)dx+\int_{B_{s}(x_0)}gdx\right\}
\end{aligned}
\]
for some $C>1$ and $\theta<1$. Then it follows that
\begin{equation}\label{eq: growth estimate 2}
\int_{B_{r/2}(x_0)\cap B_R^+}(|\nabla u|^{2})^{\mu}dx  \le C\left(r^{-n}\int_{B_{r}(x_0)\cap B_R^+}d^{n}(u,h)dx+\int_{B_{r}(x_0)\cap B_R^+}(g+|\nabla h|_n)dx\right).
\end{equation}

We first assume that the $n$-th component of $x_0$ is no bigger than ${3r}/{4}$. In this case, $d(u,h)=0$ in $B_r(x_0)\backslash B_R^+$ and $\mathcal{L}^n\big(B_r(x_0)\backslash B_R^+\big)\geq c\mathcal{L}^n\big(B_r(x_0)\big)$ for some $c=c(n)>0$. By the Sobolev-Poincar\'e inequality and H\"older's inequality, for $n/2< \hat{q}<n$, we have
\begin{equation*}
\begin{aligned}
\dashint_{B_{r}(x_0)\cap B_R^+}d^{n}(u,h)dx&\le C r^n\left(\dashint_{B_{r}(x_0)}\left((|\nabla u|^2)^{\frac {\hat{q}}{2}}+(|\nabla h|^2)^{\frac {\hat{q}}{2}}\right)dx\right)^{\frac {n}{\hat{q}}}\\
&\leq Cr^n\left\{\left(\dashint_{B_{r}(x_0)}(|\nabla u|^2)^{\frac {\hat{q}}{2}}dx\right)^{\frac {n}{\hat{q}}}+\dashint_{B_{r}(x_0)}|\nabla h|_n dx\right\}.
\end{aligned}
\end{equation*}
%\textbf{(Remark: in Euclidean space, we can take $q=n/2$ since $W^{1,n/2}(\Omega)\subset L^n(\Om)$. Is it still true here? How about the power in the previous Poincare inequality?)}
From now on, the remaining proof for this case has no much difference from that of Theorem \ref{thm:main theorem} and so we omit the details.

If the $n$-th component of $x_0$ is no less than ${3r}/{4}$, we directly apply the proof of Theorem \ref{thm:main theorem} to obtain interior regularity estimate of type \eqref{eq: RHI for w} in which the balls $B_{R/2}$ and $B_{R}$ are replaced by $B_{r/2}(x_0)$ and $B_{3r/4}(x_0)$, respectively. This again implies \eqref{eq: boundary RHI}.  The proof is complete.

%\begin{equation}\label{eq: growth estimate 2}
%\begin{aligned}\int_{B_{r/2}(x_0)\cap B_1^+}(|\nabla u|^{2})^{\mu}dx & \le C\Big(r^{-n}\int_{B_{r/2}\cap B_1^+}d^{n}(u,h)dx+\int_{B_{r/2}\cap B_1^+}(g+|\nabla h|^n)dx\Big).\end{aligned}
%\end{equation}

\end{proof}

\subsection*{Acknowledgements}

The authors would like to thank Prof.~Stefan Wenger for his interest in this work and for his useful comments. They also wish to thank the referees for their valuable comments that greatly improve the exposition.


\begin{thebibliography}{99}
	
	\bibitem{cl01}
	L. Capogna and F-H. Lin, \textit{Legendrian energy minimizers. I. Heisenberg group target}, Calc. Var. Partial Differential Equations \textbf{12}(2001),  145-171.	
	
\bibitem{c95}
J.Y. Chen, \textit{On energy minimizing mappings between and into singular spaces}, Duke Math. J. \textbf{79}(1995), 77-99.	

\bibitem{dm10}
G. Daskalopoulos and C. Mese, \textit{Harmonic maps between singular spaces I}, Comm. Anal. Geom. \textbf{18}(2010),  257-337.

     \bibitem{ef01}
     J. Eells and B. Fuglede, \textit{Harmonic maps between Riemannian polyhedra. With a preface by M. Gromov}, Cambridge Tracts in Mathematics, 142. Cambridge University Press, Cambridge, 2001.

\bibitem{es64}
J. Eells and J.H. Sampson, \textit{Harmonic mappings of Riemannian manifolds}, Amer. J. Math. \textbf{86}(1964), 109-160.
	
\bibitem{Gehring-1973-Acta} F. W. Gehring, \textit{ The $L^p$-integrability of the partial derivatives of a quasiconformal mapping.}  Acta Math.  \textbf{130}(1973), 265-277.

\bibitem{Giaquinta-Book} M. Giaquinta, \textit{ Multiple integrals in the calculus of variations and nonlinear elliptic systems.} Annals of Mathematics Studies, 105. Princeton University Press, Princeton, NJ, 1983.	

\bibitem{gg82}
M. Giaquinta and E. Giusti, \textit{On the regularity of the minima of variational integrals}, Acta Math. \textbf{148}(1982), 31-46.	

\bibitem{gg84}
M. Giaquinta and E. Giusti, \textit{Quasiminima}, Ann. Inst. H. Poincar\'e Anal. Non Lin\'eaire \textbf{1}(1984), 79-107.

\bibitem{Giaquinta-Modica-1979-JRAM}
 M. Giaquinta and G. Modica, \textit{ Regularity results for some classes of higher order nonlinear elliptic systems.} J. Reine Angew. Math.  \textbf{311/312}(1979), 145-169.


 \bibitem{g98}
G. Gregori, \textit{Sobolev spaces and harmonic maps between singular spaces}, Calc. Var. Part. Diff. Equ. \textbf{7}(1998), 1-18.

\bibitem{gs92}
M. Gromov and R. Schoen, \textit{Harmonic maps into singular spaces and $p$-adic superrigidity for lattices in groups of rank one}, Inst. Hautes \'Etudes Sci. Publ. Math.  \textbf{76}(1992), 165-246.

\bibitem{g17}
C.-Y. Guo, \textit{Harmonic mappings between singular metric spaces}, preprint 2017, available at \emph{https://arxiv.org/abs/1702.05086}.
	
\bibitem{gw17}
C.-Y. Guo and S. Wenger, \textit{Area minimizing discs in locally non-compact spaces}, Comm. Anal. Geom., in press 2017.

\bibitem{h75}
R.S. Hamilton, \textit{Harmonic maps of manifolds with boundary}, Lecture Notes in Mathematics, Vol. 471. Springer-Verlag, Berlin-New York, 1975.


\bibitem{Hardt-Lin-1987} R. Hardt and F.-H.  Lin,  \textit{Mappings minimizing the $L^p$  norm of the gradient.} Comm. Pure Appl. Math.  \textbf{40}(1987),  555-588.

\bibitem{h67}
P. Hartman, \textit{On homotopic harmonic maps}, Canad. J. Math. \textbf{19}(1967), 673-687. 	

\bibitem{hkm06}
J. Heinonen, T. Kilpel\"ainen and O. Martio, \textit{Nonlinear potential theory of degenerate elliptic equations}, Unabridged republication of the 1993 original. Dover Publications, Inc., Mineola, NY, 2006.


%\bibitem{hk98}
%J. Heinonen and P. Koskela, \textit{Quasiconformal maps in metric spaces with controlled geometry}, Acta Math. 181 (1998), 1-61.


\bibitem{hkst01}
J. Heinonen, P. Koskela, N. Shanmugalingam and J.T. Tyson, \textit{Sobolev classes of Banach space-valued functions and quasiconformal mappings}, J. Anal. Math. 85 (2001), 87-139.


\bibitem{hkst12}
J. Heinonen, P. Koskela, N. Shanmugalingam and J.T. Tyson, \textit{Sobolev spaces on metric measure spaces: an approach based on upper gradients}, New Mathematical Monographs, 27, Cambridge University Press, Cambridge, 2015.

\bibitem{hz16}
J.-C. Huang and H.C. Zhang, \textit{Harmonic maps between Alexandrov spaces}, J. Geom. Anal. \textbf{27}(2017), 1355-1392.

	\bibitem{iw08}
	W. Ishizuka and C.-Y. Wang, \textit{Harmonic maps from manifolds of $L^\infty$-Riemannian metrics}, Calc. Var. Partial Differential Equat. \textbf{32}(2008), 387-405.

\bibitem{io12}
T. Iwaniec and J. Onninen, \textit{$n$-harmonic mappings between annuli: the art of integrating free Lagrangians}, Mem. Amer. Math. Soc. \textbf{218}(2012), viii+105 pp.


\bibitem{j14}
R. Jiang, \textit{Cheeger-harmonic functions in metric measure spaces revisited}, J. Funct. Anal. \textbf{266}(2014), 1373-1394.

\bibitem{j94}
J. Jost, \textit{Equilibrium maps between metric spaces}, Calc. Var. Partial Differential Equations \textbf{2}(1994), 173-204.

\bibitem{j97}
J. Jost, \textit{Generalized Dirichlet forms and harmonic maps}, Calc. Var. Partial Differential Equations \textbf{5}(1997),  1-19.

\bibitem{Jost-Meier-83-MathAnn}
J. Jost and M. Meier,
\textit{Boundary regularity for minima of certain quadratic functionals. }
Math. Ann.  \textbf{262}(1983), 549-561.
	
\bibitem{ks01}
J. Kinnunen and N. Shanmugalingam, \textit{Regularity of quasi-minimizers on metric spaces}, Manuscripta Math. \textbf{105}(2001),  401-423.


\bibitem{ks93}
N.J. Korevaar and R. Schoen, \textit{Sobolev spaces and harmonic maps for metric space targets}, Comm. Anal. Geom. \textbf{1}(1993),  561-659.


\bibitem{krs03}
	P. Koskela, K. Rajala and N. Shanmugalingam, \textit{Lipschitz continuity of Cheeger-harmonic functions in metric measure spaces}, J. Funct. Anal. \textbf{202}(2003), 147-173.
	
\bibitem{kst04}
P. Koskela, N. Shanmugalingam, and J.T. Tyson, \textit{Dirichlet forms, Poincar\'e inequalities, and the Sobolev spaces of Korevaar and Schoen}, Potential Anal.  21  (2004),  no. 3, 241-262.
	
\bibitem{li12}
P. Li, \textit{Geometric analysis}, Cambridge Studies in Advanced Mathematics, 134. Cambridge University Press, Cambridge, 2012.
	
%\bibitem{lin97}
%F.H. Lin, \textit{Analysis on singular spaces}, Collection of papers on geometry, analysis and mathematical physics, 114-126, World Sci. Publ., River Edge, NJ, 1997.	
	

\bibitem{lw17}
A. Lytchak and S. Wenger, \textit{Area minimizing discs in metric spaces},  Arch. Rational. Mech. Anal. \textbf{223}(2017), 1123-1182.


\bibitem{lw16}
A. Lytchak and S. Wenger, \textit{Regularity of harmonic discs in spaces with quadratic isoperimetric inequality}, Calc. Var. Partial Differential Equations \textbf{55}(2016), Paper No. 98, 19 pp.

	\bibitem{m98}
	Uwe F. Mayer, \textit{Gradient flows on nonpositively curved metric spaces and harmonic maps}, Comm. Anal. Geom. \textbf{6}(1998), 199-253.

\bibitem{m48}
C.B. Morrey, \textit{The problem of plateau on a Riemannian manifold}, Ann. Math. (2) \textbf{49}(1948), 807-851.

\bibitem{my96}
L. Mou and P. Yang, \textit{Regularity for $n$-harmonic maps}, J. Geom. Anal. \textbf{6}(1996),  91-112.

\bibitem{nvv16}
A. Naber, D. Valtorta and G. Veronelli, \textit{Quantitative regularity for $p$-harmonic maps}, preprint 2016.

\bibitem{su82}
R. Schoen and K. Uhlenbeck, \textit{A regularity theory for harmonic maps}, J. Differential Geom. \textbf{17}(1982),  307-335.

\bibitem{su83} R. Schoen and K. Uhlenbeck,  \textit{Boundary regularity and the Dirichlet problem for harmonic maps.} J. Differential Geom.  \textbf{18}(1983), 253-268.

\bibitem{sy97}
R. Schoen and S.-T. Yau, \textit{Lectures on harmonic maps}, Conference Proceedings and Lecture Notes in Geometry and Topology, II. International Press, Cambridge, MA, 1997.

\bibitem{Serbinowski-94-CAG}  T. Serbinowski, \textit{Boundary regularity of harmonic maps to nonpositively curved metric spaces.} Comm. Anal. Geom.  \textbf{2}(1994),  139-153.
	\bibitem{s01}
	N. Shanmugalingam, \textit{Harmonic functions on metric spaces}, Illinois J. Math.
\textbf{45}(2001),  1021-1050.

%\bibitem{s96}
%L. Simon, \textit{Theorems on regularity and singularity of energy minimizing maps}, Based on lecture notes by Norbert Hungerb\"uhler. Lectures in Mathematics ETH Z\"urich. Birkh\"auser Verlag, Basel, 1996.

	
	\bibitem{st01}
	K.-T. Sturm, \textit{Nonlinear Markov operators associated with symmetric Markov kernels and energy minimizing maps between singular spaces}, Calc. Var. \textbf{12}(2001), 317-357.

	
	\bibitem{s02}
	K.-T. Sturm, \textit{Nonlinear martingale theory for processes with values in metric spaces of nonpositive curvature}, Ann. Probab. \textbf{30}(2002), 1195-1222.

	
	\bibitem{s02b}
	K.-T. Sturm, \textit{Nonlinear Markov operators, discrete heat flow, and harmonic maps between singular spaces}, Potential Anal. \textbf{16}(2002), 305-340.


\bibitem{w05}
C.-Y. Wang, \textit{A compactness theorem of $n$-harmonic maps}, Ann. Inst. H. Poincar\'e Anal. Non Lin\'eaire \textbf{22}(2005),  509-519.

%\bibitem{pssw}
%Peres, Y., Schramm, O., Sheffield, S.,    Wilson, D.B.:  Tug-of-war and the
%infinity Laplacian.
%{\it J. Amer. Math. Soc.} {\bf 22},   167-210 (2009)

\bibitem{zz13}
H.C. Zhang and X.P. Zhu, \textit{Lipschitz continuity of harmonic maps between Alexandrov spaces}, Invent. Math., in press 2017.


\end{thebibliography}
\end{document}